\theoremstyle{plain}
\newtheorem{thm}{Theorem}[section]
\newtheorem*{thm*}{Theorem}
\newtheorem*{prop*}{Proposition}
\newtheorem*{cor*}{Corollary}
\newtheorem*{lem*}{Lemma}
\theoremstyle{definition}
\newtheorem{defn}{Definition}[section]
\newtheorem*{defn*}{Definition}
\newtheorem*{exmp*}{Example}
\newtheorem*{exmps*}{Examples}
\newtheorem{rem}{Remark}[section]
\newtheorem*{rem*}{Remark}
\newtheorem{rems}{Remarks}[section]
\newtheorem*{rems*}{Remarks}
\newtheorem*{note*}{Note}
\newcommand{\N}{{\mathbb N}}
\newcommand{\Z}{{\mathbb Z}}
\newcommand{\R}{{\mathbb R}}
\newcommand{\C}{{\mathbb C}}
\newcommand{\F}{{\mathbb F}}
\DeclareMathOperator{\orb}{orb}
\begin{document}
\title[Chaoticity and spectrum of Rolewicz-type unbounded operators]
{On the chaoticity and spectral structure\\
of Rolewicz-type unbounded operators}
\author[Marat V. Markin]{Marat V. Markin}
\address{
Department of Mathematics\newline
California State University, Fresno\newline
5245 N. Backer Avenue, M/S PB 108\newline
Fresno, CA 93740-8001
}
\email{mmarkin@csufresno.edu}
\subjclass[2010]{Primary 47A16; Secondary 47B40, 47B15}
\keywords{Hypercyclic operator, chaotic operator, scalar type spectral operator, normal operator}
\begin{abstract}
We prove the chaoticity and describe the spectral structure of Rolewicz-type weighted backward shift unbounded linear operators in the sequence spaces $l_p$ ($1\le p<\infty$) and $c_0$.
\end{abstract}
\maketitle
\epigraph{\textit{An old thing becomes new if you detach it from what usually surrounds it.}}{Robert Bresson}

\section[Introduction]{Introduction}

We consider the weighted backward shift unbounded linear operators:
\[
D(A)\ni x:=(x_k)_{k\in \N} \mapsto Ax:=(w^k x_{k+1})_{k\in \N},
\]
where $w\in \F$ ($\F:=\R$ or $\F:=\C$) such that $|w|>1$ is arbitrary, with the maximal domain
\[
D(A):=\left\{(x_k)_{k\in \N}\in X\,\middle|\,(w^k x_{k+1})_{k\in \N}\in X \right\}
\]
in the (real or complex) space $X:=l_p$ ($1\le p<\infty$) or $X:=c_0$, which naturally emerge from the classical Rolewicz weighted backward shift bounded linear operators
\[
X\ni x:=(x_k)_{k\in \N} \mapsto Ax:=w(x_{k+1})_{k\in \N},
\]
with $w\in \F$ such that $|w|>1$ \cite{Rolewicz1969} (see also \cite{Grosse-Erdmann-Manguillot}).

We show that each operator $A$ of this form is \textit{chaotic} and, provided the underlying space is complex, its spectrum $\sigma(A)=\C$, every $\lambda \in \C$ being an \textit{eigenvalue} for $A$ with an associated one-dimensional eigenspace.

\section[Preliminaries]{Preliminaries}

The notions of \textit{hypercyclicity} and \textit{chaoticity}, traditionally considered and well studied for \textit{continuous} linear operators on Fr\'echet spaces, in particular for \textit{bounded} linear operators on Banach spaces, and shown to be purely infinite-dimensional phenomena (see, e.g., \cite{Grosse-Erdmann-Manguillot,Guirao-Montesinos-Zizler,Rolewicz1969}), in \cite{B-Ch-S2001,deL-E-G-E2003} are extended 
to \textit{unbounded} linear ope\-rators in Banach spaces, where also found are sufficient conditions for unbounded hypercyclicity and certain examples of hypercyclic and chaotic unbounded linear differential operators. The chaoticity of general unbounded weighted backward shifts in the Bargmann space, which is a complex infinite-dimensional separable Hilbert space of entire functions, is characterized in \cite{Emam-Hesh2005}, where in particular, it is shown that the spectrum of such operators is the whole complex plane $\C$, with every $\lambda \in \C$ being an \textit{eigenvalue}. In \cite{Markin2018(7)}, (bounded or unbounded) \textit{scalar type spectral operators} in a complex Banach, in particular \textit{normal} ones in a complex Hilbert space, (see, e.g., \cite{Dunford1954,Survey58,Dun-SchI,Dun-SchII,Dun-SchIII,Plesner}) are proven to be \textit{non-hypercyclic}. 

\begin{defn}[Hypercyclicity and Chaoticity \cite{B-Ch-S2001,deL-E-G-E2003}] Let
\[
A:X\supseteq D(A)\to X
\]
be a (bounded or unbounded) linear operator in a (real or complex) Banach space $(X,\|\cdot\|)$. A nonzero vector 
\begin{equation}\label{idv}
x\in C^\infty(A):=\bigcap_{n=0}^{\infty}D(A^n)
\end{equation}
($A^0:=I$, $I$ is the \textit{identity operator} on $X$)
is called \textit{hypercyclic} if its \textit{orbit} under $A$
\[
\orb(x,A):=\left\{A^nx\right\}_{n\in\Z_+}
\]
($\Z_+:=\left\{0,1,2,\dots\right\}$ is the set of nonnegative integers) is dense in $X$.

Linear operators possessing hypercyclic vectors are
said to be \textit{hypercyclic}.

If there exist an $N\in \N$ ($\N:=\left\{1,2,\dots\right\}$ is the set of natural numbers) and a vector $x\in C^\infty(A)$ with $A^Nx = x$, such a vector is called a \textit{periodic point} for the operator $A$.

Hypercyclic linear operators with a dense set of periodic points are said to be \textit{chaotic}.
\end{defn}

\begin{rems}\
\begin{itemize}
\item In the definition of hypercyclicity, the underlying space is necessarily \textit{separable}.
\item If $x\in C^\infty(A)$ is a hypercyclic vector for a linear operator $A$ in a Banach space $(X,\|\cdot\|)$, then all vectors forming its orbit $\orb(x,A):=\left\{A^nx\right\}_{n\in\Z_+}$ are linearly independent and hypercyclic for $A$ (see, e.g., \cite{Grosse-Erdmann-Manguillot}).
\item The set $HC(A)$ of all hypercyclic vectors of a linear operator $A$, containing the dense orbit of its arbitrary hypercyclic vector, is dense in $(X,\|\cdot\|)$. The more so, dense in $(X,\|\cdot\|)$ is the subspace $C^\infty(A)\supset HC(A)$.
\end{itemize} 
\end{rems} 

In \cite{Rolewicz1969}, S. Rolewicz provides the first example of a hypercyclic bounded linear operator on a Banach space (see also \cite{Grosse-Erdmann-Manguillot,Guirao-Montesinos-Zizler}), which on the (real or complex) sequence space $l_p$ ($1\le p<\infty$) or $c_0$ (of vanishing sequences), the latter equipped with the supremum norm
\[
c_0\ni x:=(x_k)_{k\in \N}\mapsto \|x\|_\infty:=\sup_{k\in \N}|x_k|,
\] 
is the following weighted backward shift:
\[
A(x_k)_{k\in \N}:=w(x_{k+1})_{k\in \N},
\]
where $w\in \F$ ($\F:=\R$ or $\F:=\C$) such that $|w|>1$ is arbitrary. Rolewicz's operators also happen to be \textit{chaotic} \cite{Godefroy-Shapiro1991} (see also \cite{Grosse-Erdmann-Manguillot}). 

\section[Chaoticity and spectral structure of Rolewicz-type unbounded operators]{Chaoticity and spectral structure\\ of Rolewicz-type unbounded operators}

Here, we are to consider a natural unbounded extension
of the classical Rolewicz weighted backward shift operators.

\begin{thm}\label{Thm1} 
In the (real or complex) sequence space $X:=l_p$ ($1\le p<\infty$) or $X:=c_0$, the weighted backward shift
\[
D(A)\ni x:=(x_k)_{k\in \N} \mapsto Ax:=(w^k x_{k+1})_{k\in \N},
\]
where $w\in \R$ or $w\in \C$ such that $|w|>1$ is arbitrary, with the domain
\[
D(A):=\left\{(x_k)_{k\in \N}\in X\,\middle|\,(w^k x_{k+1})_{k\in \N}\in X \right\}
\]
is a chaotic unbounded linear operator.

Provided the underlying space $X$ is complex, its spectrum $\sigma(A)=\C$, every $\lambda \in \C$ being an \textit{eigenvalue} for $A$ with an associated one-dimensional eigenspace.
\end{thm}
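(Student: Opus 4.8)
The plan is to treat the two assertions separately, building everything on the observation that the space $c_{00}$ of finitely supported sequences lies in $C^\infty(A)=\bigcap_{n=0}^\infty D(A^n)$ and is dense in $X$. Unboundedness is then immediate: writing $e_k$ for the standard unit vectors, one has $Ae_{k+1}=w^ke_k$, so $\|Ae_{k+1}\|/\|e_{k+1}\|=|w|^k\to\infty$. Since $A$ maps $c_{00}$ into $c_{00}$, every finitely supported vector lies in $C^\infty(A)$, and as $c_{00}$ is dense in $l_p$ and in $c_0$ this supplies a dense operational core on which all iterates are defined and easy to compute. A direct induction gives the clean formula $(A^nx)_j=w^{\,nj+\binom{n}{2}}x_{j+n}$, which I will use repeatedly.

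For hypercyclicity I would invoke the Hypercyclicity Criterion for unbounded operators \cite{B-Ch-S2001,deL-E-G-E2003} with $X_0=Y_0=c_{00}$. First, for $x\in c_{00}$ the iterates $A^nx$ vanish for all large $n$, so $A^nx\to 0$. Next I introduce the bounded weighted forward shift
\[
S(y_k)_{k\in\N}:=\bigl(0,\,w^{-1}y_1,\,w^{-2}y_2,\,w^{-3}y_3,\dots\bigr),
\]
which satisfies $AS=I$ and $\|S\|\le|w|^{-1}<1$. Taking $S_n:=S^n$, one gets $A^nS_ny=y$ exactly and $\|S_ny\|\le|w|^{-n}\|y\|\to 0$; moreover $S_n$ maps $c_{00}$ into $c_{00}\subseteq C^\infty(A)$, so the criterion applies and $A$ is hypercyclic. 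This argument is insensitive to the scalar field, so it covers the real and complex cases alike.

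It remains to produce a dense set of periodic points, and here I would argue directly rather than through eigenvectors (which would only handle the complex case cleanly). Fix $y\in c_{00}$ supported in $\{1,\dots,M\}$ and an integer $N>M$. The equation $A^Nx=x$ reads $x_{j+N}=w^{-Nj-\binom{N}{2}}x_j$, so a solution is uniquely determined by its first $N$ entries; prescribing $x_j=y_j$ for $1\le j\le N$ and extending by this recurrence yields a vector $\psi_N\in\ker(A^N-I)$. By construction $\psi_N$ agrees with $y$ on the first $N$ coordinates, while its tail entries carry factors of size $|w|^{-\Theta(N^2)}$ and are thus negligible, so a short estimate gives $\|\psi_N-y\|\to 0$ as $N\to\infty$. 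The same super-exponential decay shows $\psi_N\in C^\infty(A)$, making each $\psi_N$ a genuine periodic point. Since $c_{00}$ is dense, the periodic points are dense, and together with hypercyclicity this establishes chaoticity over both $\R$ and $\C$.

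For the spectral statement (complex $X$) I would solve $Ax=\lambda x$, i.e. $x_{k+1}=\lambda w^{-k}x_k$. This first-order recurrence has a one-dimensional solution space spanned by $\phi_\lambda:=\bigl(\lambda^{k-1}w^{-\binom{k}{2}}\bigr)_{k\in\N}$, and because $|w|>1$ the factor $|w|^{-\binom{k}{2}}$ forces super-exponential decay, so $\phi_\lambda\in X$ for every $\lambda\in\C$. Thus every $\lambda\in\C$ is an eigenvalue with one-dimensional eigenspace $\spa\{\phi_\lambda\}$, whence $\sigma(A)\supseteq\sigma_p(A)=\C$ and therefore $\sigma(A)=\C$. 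I expect the main obstacle to be the periodic-point step: one must simultaneously keep the constructed vectors inside $C^\infty(A)$, verify that they are genuinely $N$-periodic, and control their tails uniformly enough to force convergence to an arbitrary finitely supported target, all without the analytic-function machinery that would be available only in the complex case.
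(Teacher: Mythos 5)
Your proposal is correct, and for the hypercyclicity step it takes a genuinely different route from the paper. The paper never invokes the Hypercyclicity Criterion: it constructs a hypercyclic vector explicitly, fixing a countable dense set $Y=\left\{y^{(m)}\right\}$ of eventually zero (complex) rational sequences, inductively choosing a rapidly increasing sequence $(n_m)$ subject to explicit growth estimates, and showing that $x:=\sum_{m=1}^\infty B^{n_m}y^{(m)}$ (where $B$ is your $S$) satisfies $\left\|y^{(k)}-A^{n_k}x\right\|\to 0$, the closedness of each power $A^{n_k}$ being used to justify both $x\in C^\infty(A)$ and the interchange of $A^{n_k}$ with the series; in effect the paper reproves the criterion in this instance, which buys self-containedness and a concrete hypercyclic vector. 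Your appeal to the criterion of \cite{B-Ch-S2001,deL-E-G-E2003} with $X_0=Y_0=c_{00}$ and $S_n:=S^n$ is legitimate and shorter, and your verifications ($A^ny=0$ eventually for $y\in c_{00}$, $AS=I$, $\|S\|\le |w|^{-1}<1$, $S(c_{00})\subseteq c_{00}$) are exactly the right ones; the one hypothesis you omit is that the unbounded criterion requires each power $A^n$ to be densely defined and \emph{closed}, so you should add the routine check (if $x^{(m)}\to x$ and $A^nx^{(m)}\to z$, then coordinatewise $w^{nk+\binom{n}{2}}x_{k+n}=z_k$, whence $x\in D(A^n)$ and $A^nx=z$) — the paper asserts this same closedness with an ``as is easily seen.'' Your periodic-point construction, solving $A^Nx=x$ by prescribing the first $N$ coordinates and extending via $x_{j+N}=w^{-Nj-\binom{N}{2}}x_j$, is precisely the paper's formula \eqref{pp} in recursive form, with the same tail estimate and the same density conclusion, and it correctly works over both $\R$ and $\C$, as in the paper; your spectral computation, with eigenvector $\phi_\lambda=\bigl(\lambda^{k-1}w^{-\binom{k}{2}}\bigr)_{k\in\N}$, coincides with the paper's up to rewriting $\lambda^{k-1}w^{-\binom{k}{2}}=\bigl(\lambda/w^{k/2}\bigr)^{k-1}$, and your gloss that $\phi_\lambda\in D(A)$ is harmless since the formal image of $\phi_\lambda$ under $A$ is $\lambda\phi_\lambda\in X$, which is exactly the membership the paper verifies by a direct estimate.
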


\begin{proof} 
We let $\F:=\R$ or $\F:=\C$ and use  $\|\cdot\|$ to designate norm on $X$.

For each $n\in\N$, the linear operator
\[
A^n(x_k)_{k\in \N}=(w^{(k+n-1)+(k+n-2)+\dots+k} x_{k+n})_{k\in \N},\ (x_k)_{k\in \N}\in D(A^n),
\]
is \textit{densely defined}, the maximal domain 
\[
D(A^n):=\left\{(x_k)_{k\in \N}\in X\,\middle|\,(w^{(k+n-1)+(k+n-2)+\dots+k} x_{k+n})_{k\in \N}\in X \right\}
\]
containing the dense in $(X,\|\cdot\|)$ subspace $c_{00}$
of eventually zero sequences, and, as is easily seen,
is \textit{unbounded} and \textit{closed}.

Hence, the subspace 
\[
C^\infty(A):=\bigcap_{n=0}^{\infty}D(A^n)\supset c_{00}
\]
of all possible initial values for the orbits under $A$ is \textit{dense} in $(X,\|\cdot\|)$.

Observe that
\begin{equation}\label{incl1}
D(A^{n+1})\subseteq D(A^n),\ n\in\N.
\end{equation}

For the bounded right inverse of $A$:
\[
B(x_k)_{k\in \N}:=(0,w^{-1} x_1,w^{-2} x_2,\dots),\ (x_k)_{k\in \N}\in X,
\]
for any $n\in \N$, on $X$, we have:
\begin{equation}\label{rin}
B^n(x_k)_{k\in \N}=(\underbrace{0,\dots,0}_{\text{$n$ terms}},w^{-[n+(n-1)+\dots+1]}x_1,w^{-[(n+1)+n+\dots+2]}x_2,\dots).
\end{equation}

The subspace $c_{00}$ contains a countable dense in $(X,\|\cdot\|)$ subset
\begin{equation}\label{def2}
Y:=\left\{y^{(m)}:=\left(y^{(m)}_k\right)_{k\in \N}\right\}_{m\in \N}
\end{equation}
of all nonzero eventually zero sequences with rational (or complex rational) terms.

For each $m\in \N$, let
\begin{equation}\label{def1}
k_m:=\max\left\{k\in \N\,\middle|\,y^{(m)}_k\neq 0 \right\}.
\end{equation}

Setting $n_1:=1$, we can inductively build an increasing sequence
$(n_m)_{m\in \N}$ of natural numbers such that, for all $j,m\in \N$ with $m>j$,
\begin{equation}\label{est1}
\begin{split}
&n_m-n_j\ge \max(m,k_j)\quad \text{and}   \\
&|w|^{n_m+(n_m-1)+\dots+1}\ge |w|^{n_m+(n_m-1)+\dots+(n_m-n_j+1)+m}\left\|y^{(m)}\right\|.
\end{split}
\end{equation}

Let us show that the vector
\begin{equation}\label{def3}
x:=\sum_{m=1}^\infty B^{n_m}y^{(m)}
\end{equation}
is \textit{hypercyclic} for $A$, the above series converging in $(X,\|\cdot\|)$
since, for any $m=2,3,\dots$, in view of \eqref{rin} and \eqref{est1} with $j=1$,
\[
\left\|B^{n_m}y^{(m)}\right\|
\le |w|^{-[n_m+(n_m-1)+\dots+1]}\left\|y^{(m)}\right\|\le |w|^{-n_m-m}\le |w|^{-m}.
\]

Further, for each $k\in \N$,
\begin{multline*}
\sum_{m=1}^\infty A^{n_k}B^{n_m}y^{(m)}
=\sum_{m=1}^{k-1} A^{n_k-n_m}y^{(m)}+y^{(k)}
+\sum_{m=k+1}^\infty B^{n_m-n_k}y^{(m)}
\\
\hfill
\text{for $m=1,\dots,k-1$, by \eqref{est1}, $n_k-n_m\ge k_m$, by \eqref{def1}, $A^{n_k-n_m}y^{(m)}=0$;}
\\
\ \ \
=y^{(k)}+\sum_{m=k+1}^\infty B^{n_m-n_k}y^{(m)}.
\hfill
\end{multline*}

Since, for all $k,m\in \N$ with $m>k$, in view of \eqref{est1} with $j=k$,
\begin{equation}\label{est2}
\left\|B^{n_m-n_k}y^{(m)}\right\|
\le |w|^{-[(n_m-n_k)+(n_m-n_k-1)+\dots+1]}\left\|y^{(m)}\right\|\le |w|^{-m}.
\end{equation}

This implies that, for any $k\in \N$, the series
\[
\sum_{m=1}^\infty A^{n_k}B^{n_m}y^{(m)}
\]
converges in $(X,\|\cdot\|)$, and hence, by the \textit{closedness} of $A^{n_k}$ and in view of inclusion \eqref{incl1},
\[
x\in \bigcap_{k=1}^{\infty}D(A^{n_k})=C^\infty(A)
\]
and 
\[
\forall\, k\in \N:\ A^{n_k}x=y^{(k)}+\sum_{m=k+1}^\infty B^{n_m-n_k}y^{(m)}.
\]

Furthermore, since, by \eqref{est2}, for each $k\in \N$,
\[
\left\|y^{(k)}-A^{n_k}x\right\|
\le \sum_{m=k+1}^\infty \left\|B^{n_m-n_k}y^{(m)}\right\|
\le \sum_{m=k+1}^\infty
|w|^{-m}
=\dfrac{|w|^{-(k+1)}}{1-|w|^{-1}}\to 0,\ k\to\infty,
\]
which, in view of the denseness of $Y$ in $(X,\|\cdot\|)$, shows that the orbit
$\orb(x,A)$ of $x$ under $A$
is dense in $(X,\|\cdot\|)$ as well. This implies that the operator $A$ is \textit{hypercyclic}.

Now, let us show that the operator $A$ is \textit{chaotic}. Indeed, for any $N\in \N$, let $x_1,\dots,x_N\in \F$ be arbitrary, then

\begin{equation}\label{pp}
\begin{split}
x:=(&x_1,\dots,x_N,w^{-(N+\dots+1)}x_1,\dots,w^{-(2N-1+\dots+N)}x_N,\\
&w^{-(2N+\dots+N+1)}x_1,\dots,w^{-(3N-1+\dots+2N)}x_N,\dots)\in D(A^N)
\end{split}
\end{equation}
and
\[
A^N x=x.
\]

For any 
\[
y:=(y_1,\dots,y_n,0,\dots)\in Y
\]
with some $n\in \N$ and arbitrary
$N\ge n$, consider the periodic point defined by \eqref{pp} with
\[
x_k:=\begin{cases}
y_k,& k=1,\dots,n,\\
0,& k=n+1,\dots,N.
\end{cases}
\]

Then, for $x\in C^\infty(A)$ defined by \eqref{pp},
\[
\|y-x\|\le \sum_{k=1}^{\infty}|w|^{-kN}\|y\|
=\|y\|\dfrac{|w|^{-N}}{1-|w|^{-N}}\to 0,\ N\to \infty.
\]

Whence, in view of the denseness of $Y$ in $(X,\|\cdot\|)$, we infer that the set of periodic points of $A$ is dense in $(X,\|\cdot\|)$ as well,
i.e., the operator $A$ is \textit{chaotic}.

It remains to prove that, if the space $X$ \textit{complex}, the spectrum $\sigma(A)$ of $A$ is the whole complex plane $\C$, with every $\lambda \in \C$ being an \textit{eigenvalue} for $A$ with associated one-dimensional eigenspace. Indeed, let $\lambda\in \C$
be arbitrary. Then for an $x:=(x_k)_{k\in \N}\in X\setminus \{0\}$, the equation
\begin{equation}\label{ev}
Ax=\lambda x
\end{equation}
is equivalent to
\[
(w^k x_{k+1})_{k\in \N}=\lambda(x_k)_{k\in \N},
\]
i.e.,
\[
w^k x_{k+1}=\lambda x_k,\ k\in \N
\]

Whence, we recursively infer that
\[
x_k=\dfrac{\lambda^{k-1}}{w^{1+2+\cdots+k-1}}x_1
=\dfrac{\lambda^{k-1}}{w^{\frac{k(k-1)}{2}}}x_1
=\left(\dfrac{\lambda}{w^{\frac{k}{2}}}\right)^{k-1}x_1,\
k\in \N,
\]
where for $\lambda=0$, $0^0:=1$.

Considering that $|w|>1$, for all sufficiently large $k\in \N$, we have:
\[
\left|\dfrac{\lambda}{w^{\frac{k}{2}}}\right|^{k-1}
=\left(\dfrac{|\lambda|}{|w|^{\frac{k}{2}}}\right)^{k-1}
\le \left(\dfrac{1}{2}\right)^{k-1},
\]
which implies that
\[
(y_k)_{k\in \N}:=\left(\left(\dfrac{\lambda}{w^{\frac{k}{2}}}\right)^{k-1}\right)_{k\in \N}\in X.
\]

Further, since
\[
w^ky_{k+1}=w^k\dfrac{\lambda^{k}}{w^{1+2+\cdots+k}}
=\dfrac{\lambda^{k}}{w^{1+2+\cdots+k-1}}
=\dfrac{\lambda^{k}}{w^{\frac{k(k-1)}{2}}}x_1
=\left(\dfrac{\lambda}{w^{\frac{k-1}{2}}}\right)^{k},\ k\in \N,
\]
we similarly conclude that
\[
(w^ky_{k+1})_{k\in \N}\in X,
\]
and hence,
\[
(y_k)_{k\in \N}\in D(A)\setminus \{0\}.
\]

Thus, we have shown that, for an arbitrary $\lambda \in \C$, all solutions of equation \ref{ev} are the sequences of the form
\[
x_1\left(\left(\dfrac{\lambda}{w^{\frac{k}{2}}}\right)^{k-1}\right)_{k\in \N}\in D(A),
\]
where $x_1\in \C$ is arbitrary. They form the \textit{one-dimensional} subspace of $X$ spanned by the sequence
$\left(\left(\dfrac{\lambda}{w^{\frac{k}{2}}}\right)^{k-1}\right)_{k\in \N}\in D(A)\setminus \{0\}$, which completes the proof.
\end{proof}

\begin{rem}
Considering that, in the (real or complex) separable Banach space
$X:=L_p(0,\infty)$ ($1\le p<\infty$) or $X:=C_0[0,\infty)$ (of continuous on $[0,\infty)$
and vanishing at infinity functions), the latter equipped with the maximum norm
\[
C_0[0,\infty)\ni x\mapsto\|x\|_\infty:=\max_{t\ge 0}|x(t)|,
\]
there similarly exist countable dense subsets of
\textit{eventually zero} elements (see, e.g., \cite{Dun-SchI}), the proof of the prior theorem 
can be modified for the weighted backward shift
\[
[Ax](t):=w^tx(t+a),\ t\ge 0,
\]
where $w\in \F$ ($\F:=\R$ or $\F:=\C$) such that $|w|>1$ is arbitrary, with the maximal domain
\[
D(A):=\left\{x(\cdot)\in X\,\middle|\,w^\cdot x(\cdot+a)\in X \right\}.
\]

In the case of $L_p(0,\infty)$ ($1\le p<\infty$), the notations $x(\cdot)$ and $w^\cdot x(\cdot+a)$ are used to designate both the equivalence classes of functions and their corresponding representatives. 
\end{rem}

\section{Acknowledgments}

My utmost appreciation goes to Professor Karl Grosse-Erdmann of the Universit\'e de Mons Institut de Math\'ematique for his kind interest in my recent endeavors in the realm of unbounded linear hypercyclicity and chaos (see also \cite{Markin2018(8)}), turning my attention to the existing research on the subject, and communicating relevant references. I am also very grateful to my pupil, Mr. Edward Sichel of California State University, Fresno, for his kind and thoughtful assistance.

 
\end{document}